\documentclass[11pt,reqno]{amsart}
\usepackage[utf8]{inputenc}

\usepackage[margin=1in]{geometry}
\usepackage{amsmath,amssymb,amsthm}
\usepackage{mathtools}
\usepackage[T1]{fontenc}
\usepackage{lmodern}
\usepackage{microtype}
\usepackage{xcolor}
\usepackage{hyperref}

\hypersetup{
  colorlinks=true,
  linkcolor=blue!50!black,
  citecolor=blue!50!black,
  urlcolor=blue!50!black
}

\newtheorem{theorem}{Theorem}
\newtheorem{lemma}{Lemma}
\newtheorem{corollary}{Corollary}
\theoremstyle{definition}
\newtheorem{conjecture}{Conjecture}
\theoremstyle{remark}
\newtheorem*{remark}{Remark}

\title[Zeros of the tempered xi function]{Infinitely Many Off-Critical-Line Zeros of the Tempered Xi Function: A Disproof of Yang's Conjecture}

\author{Aiken Kazin}
\address{SDU University, Kaskelen, Kazakhstan}
\email{aiken.kazin@sdu.edu.kz}

\author{Shirali Kadyrov}
\address{Ajman University, Ajman, United Arab Emirates}
\email{s.kadyrov@ajman.ac.ae}

\subjclass[2020]{Primary 11M26; Secondary 30D15, 42A38}
\keywords{Riemann xi function, tempered xi function, Fourier sine transform, zeros of entire functions, Hadamard factorization}
\date{September 24, 2026}

\begin{document}

\begin{abstract}
Yang introduced a tempered xi function $\widehat\xi(s)$ by replacing the hyperbolic cosine in a classical
integral representation of the Riemann xi function by a hyperbolic sine, and conjectured that every zero
of $\widehat\xi$ lies on the critical line $\Re s=1/2$. We disprove this conjecture. After the change of
variables $x=e^{2t}$, one has
\[
\widehat\xi\!\left(\tfrac12+iz\right)=iS(z),
\qquad
S(z)=\int_0^\infty K(t)\sin(zt)\,dt,
\]
where $K$ is positive, smooth, and doubly exponentially decaying. Two integrations by parts give
$S(y)=K(0)/y+O(y^{-2})$ for real $|y|\to\infty$, with $K(0)>0$; hence $S$ has only finitely many real
zeros. On the other hand $S$ is an entire function of order at most one. If it had only finitely many
zeros in the complex plane, Hadamard factorization would force $S(z)=P(z)e^{az+b}$, which is incompatible
with $S(y)\to0$ as $y\to\pm\infty$. Thus $S$ has infinitely many nonreal zeros, and consequently
$\widehat\xi$ has infinitely many zeros off the critical line. The proof is unconditional and does not
use the Riemann Hypothesis or numerical zero finding. A short numerical illustration is included.
\end{abstract}

\maketitle
\section{Introduction}\label{sec:intro}

Let
\[
\Phi(x)=\sum_{n=1}^\infty e^{-\pi n^2x},
\qquad
M(x)=x^{-1/4}\frac{d}{dx}\!\left(x^{3/2}\Phi'(x)\right),
\qquad x\ge1.
\]
A classical integral representation of the Riemann xi function is
\begin{equation}
\xi(s)=4\int_1^\infty M(x)
\cosh\!\left(\frac12\left(s-\frac12\right)\log x\right)dx,
\label{eq:xi-classical}
\end{equation}
see, for example, Edwards~\cite{edwards1974} or Titchmarsh and Heath-Brown~\cite{titchmarsh1986}.

Yang~\cite{yang2024} defined the \emph{tempered xi function}
\begin{equation}
\widehat\xi(s)=4\int_1^\infty M(x)
\sinh\!\left(\frac12\left(s-\frac12\right)\log x\right)dx,
\qquad s\in\mathbb C,
\label{eq:def}
\end{equation}
and proposed the following conjecture.

\begin{conjecture}[Yang~\cite{yang2024}, Conjecture 1]\label{conj:yang}
Every zero of $\widehat\xi(s)$ lies on the critical line $\Re s=1/2$.
\end{conjecture}

The same paper gives equivalent formulations in terms of a Fourier sine transform and an even quotient
obtained after removing the central zero. Subsequent work continued to present the corresponding
real-zero question as open; in particular, Yang's 2026 invited review still lists the relevant
Fourier-sine zero problem among the challenges connected with the heat equation and the Riemann xi
function~\cite{yang2026}. We are not aware of an earlier proof or disproof of
Conjecture~\ref{conj:yang}; thus, to the best of our knowledge, the argument below gives the first
disproof.

Our main result is stronger than the existence of a single counterexample.

\begin{theorem}\label{thm:main}
The tempered xi function $\widehat\xi$ has infinitely many distinct zeros satisfying
\[
\Re s\ne\frac12.
\]
Moreover, only finitely many zeros of $\widehat\xi$ lie on the critical line, and $s=1/2$ is a simple
zero.
\end{theorem}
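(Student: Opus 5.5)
The plan follows the outline in the abstract. First I change variables $x=e^{2t}$ in \eqref{eq:def}, with $s=\tfrac12+iz$. Then $\tfrac12(s-\tfrac12)\log x = izt$, so $\sinh(izt)=i\sin(zt)$ and $dx=2e^{2t}dt$. This gives
\[
\widehat\xi(\tfrac12+iz)=iS(z),\qquad S(z)=\int_0^\infty K(t)\sin(zt)\,dt,\qquad K(t)=8e^{2t}M(e^{2t}).
\]
Next I record the properties of $K$. Writing $M$ explicitly in terms of $\Phi'$ and $\Phi''$ gives the classical formula $M(x)=\sum_n(2\pi^2n^4x^{2}-3\pi n^2x)x^{1/4}e^{-\pi n^2x}$ up to normalization. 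Each term is positive for $x\ge1$, since $2\pi n^2x>3$. So $K>0$, $K$ is smooth on $[0,\infty)$, and $K$ and all its derivatives decay like $\exp(-\pi e^{2t})$.

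Because of this decay, $S$ is entire. Moreover $|S(z)|\le\int_0^\infty K(t)e^{|z|t}dt$, and by the doubly exponential decay this is at most $\exp(C|z|\log|z|)$. Hence $S$ has order at most one; I only need finite order here, and order $\le 1$ is enough for the Hadamard step. Zeros $s$ of $\widehat\xi$ correspond to zeros $z=-i(s-\tfrac12)$ of $S$, and $\Re s=\tfrac12$ exactly when $z$ is real. At the origin, $S(0)=0$ and $S'(0)=\int_0^\infty tK(t)\,dt>0$ by positivity of $K$, so $s=\tfrac12$ is a simple zero.

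For real $y\to\pm\infty$ I integrate by parts twice:
\[
S(y)=\frac{K(0)}{y}+\frac1y\int_0^\infty K'(t)\cos(yt)\,dt=\frac{K(0)}{y}-\frac{1}{y^2}\int_0^\infty K''(t)\sin(yt)\,dt.
\]
The boundary term in the second integration vanishes because $\sin 0=0$. This gives $S(y)=K(0)/y+O(y^{-2})$, and $K(0)=8M(1)>0$. So $S$ has no real zeros with $|y|$ large. The zero set of the nonzero entire function $S$ is discrete, so the real zeros in the remaining compact interval are finite in number. Hence only finitely many zeros of $\widehat\xi$ lie on the critical line.

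Finally I suppose, for contradiction, that $S$ has finitely many zeros in total. Since $S$ has order at most $1$, Hadamard factorization gives $S(z)=P(z)e^{az+b}$ with $P$ a nonzero polynomial. On the real line, $|S(y)|=|P(y)|e^{(\Re a)y+\Re b}$. If $\Re a\ne0$, this grows exponentially in one direction. If $\Re a=0$, it behaves like $|P(y)|$, which does not tend to $0$. Either way this contradicts $S(y)\to0$ as $y\to\pm\infty$ (we even know $|S(y)|\sim K(0)/|y|$). So $S$ has infinitely many zeros, and all but finitely many are nonreal. These give infinitely many distinct zeros of $\widehat\xi$ with $\Re s\ne\tfrac12$.

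The main obstacle I expect is bookkeeping rather than depth. I need the explicit formula for $M$ in order to certify both $K>0$ (used for $K(0)>0$ and for simplicity of the zero at the origin) and the uniform decay of $K''$, which justifies the integrations by parts and the order estimate. I will verify $M(1)>0$ directly from the termwise positivity $2\pi n^2>3$.
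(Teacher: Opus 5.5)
Your proposal is correct and follows the paper's proof essentially step for step. The shared steps are: the substitution $x=e^{2t}$ giving $\widehat\xi(\tfrac12+iz)=iS(z)$; positivity and doubly exponential decay of $K$; order at most one; simplicity of the zero at the origin via $S'(0)=\int_0^\infty tK(t)\,dt>0$; the two integrations by parts giving $S(y)=K(0)/y+O(y^{-2})$; and the Hadamard contradiction $S=Pe^{az+b}$ versus $S(y)\to0$ in both directions.

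One small slip: the series you display equals $2xM(x)$, not $M(x)$ up to a constant. The correct expansion is $M(x)=\sum_n(\pi^2n^4x^{5/4}-\tfrac32\pi n^2x^{1/4})e^{-\pi n^2x}$, but since $2x>0$ this does not affect the positivity or decay you rely on.
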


The proof is elementary once the correct Fourier-sine representation is isolated. The key point is an
endpoint term that has no analogue in the cosine transform relevant to the Riemann Hypothesis.

\section{The sine-transform representation}\label{sec:kernel}

Put $a_{n}=\pi n^2$. Since the defining theta series and all of its derivatives converge uniformly for
$x\ge1$, termwise differentiation gives
\begin{equation}
M(x)=\sum_{n=1}^\infty
\left(a_{n}^2x^{5/4}-\frac{3}{2}a_{n}x^{1/4}\right)e^{-a_{n}x}.
\label{eq:Mseries}
\end{equation}
For $x=e^{2t}$ define
\begin{equation}
K(t):=8e^{2t}M(e^{2t})
=8\sum_{n=1}^\infty
\left(a_{n}^2e^{9t/2}-\frac{3}{2}a_{n}e^{5t/2}\right)e^{-a_{n}e^{2t}},
\qquad t\ge0.
\label{eq:K}
\end{equation}
Then~\eqref{eq:def} becomes
\begin{equation}
\widehat\xi\!\left(\frac12+w\right)
=\int_0^\infty K(t)\sinh(wt)\,dt.
\label{eq:xihatK}
\end{equation}
Define
\begin{equation}
S(z):=\int_0^\infty K(t)\sin(zt)\,dt.
\label{eq:S}
\end{equation}
With $w=iz$ in~\eqref{eq:xihatK},
\begin{equation}
\widehat\xi\!\left(\frac12+iz\right)=iS(z).
\label{eq:transport}
\end{equation}
Thus the critical line for $\widehat\xi$ corresponds exactly to the real axis for $S$.

\begin{lemma}\label{lem:kernel}
The function $K$ is $C^\infty$ and strictly positive for $t\ge0$. For every integer $j\ge0$ there is
$C_j>0$ such that
\begin{equation}
|K^{(j)}(t)|\le C_j
\exp\!\left(\left(\frac92+2j\right)t-\pi e^{2t}\right),
\qquad t\ge0.
\label{eq:decay}
\end{equation}
In particular, every derivative of $K$ is integrable and tends to zero at infinity.
\end{lemma}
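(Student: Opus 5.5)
The plan is to argue termwise from the series~\eqref{eq:K}. Write $u=e^{2t}\ge1$. The $n$-th term is
\[
k_n(t)=8\,a_n e^{5t/2}\Bigl(a_n e^{2t}-\tfrac32\Bigr)e^{-a_n e^{2t}}.
\]

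\emph{Positivity.} Since $a_n e^{2t}\ge a_1=\pi>3/2$ for $t\ge0$, every factor in $k_n(t)$ is strictly positive. Hence $k_n(t)>0$ for all $n$. Positivity of $K$ then follows once the series is known to converge, and that comes out of the bounds below with $j=0$.

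\emph{Structure of the derivatives.} I will prove by induction on $j$ that $k_n^{(j)}(t)$ is a finite linear combination of terms
\[
a_n^{m}\,e^{\beta t}\,e^{-a_n e^{2t}},
\]
with three properties:
\begin{itemize}
\item the coefficients depend only on $j$, not on $n$;
\item $1\le m\le j+2$;
\item $\beta\le \tfrac92+2j$, and more precisely $\beta-2m\le\tfrac12$.
\end{itemize}
The induction step is the chain rule. Differentiating $e^{\beta t}$ multiplies by $\beta$ and changes neither $m$ nor $\beta$. Differentiating $e^{-a_n e^{2t}}$ produces the factor $-2a_n e^{2t}$, which raises $m$ by $1$ and $\beta$ by $2$. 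The base case is $j=0$, where the terms are $(m,\beta)=(2,\tfrac92)$ and $(1,\tfrac52)$. Since $t\ge0$, each $e^{\beta t}$ is at most $e^{(9/2+2j)t}$.

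\emph{Summing over $n$.} The key estimate separates the dependence on $n$ from the dependence on $t$:
\[
e^{-a_n e^{2t}}=e^{-\pi e^{2t}}e^{-(a_n-\pi)e^{2t}}\le e^{-\pi e^{2t}}e^{-(a_n-\pi)},
\qquad t\ge 0.
\]
This gives
\[
|k_n^{(j)}(t)|\le D_j\,a_n^{j+2}e^{-(a_n-\pi)}\exp\Bigl(\bigl(\tfrac92+2j\bigr)t-\pi e^{2t}\Bigr),
\]
using $a_n\ge\pi>1$ to replace $a_n^m$ by $a_n^{j+2}$. The sum $\sum_n a_n^{j+2}e^{-(a_n-\pi)}$ converges, because $a_n=\pi n^2$.

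So for each $j$ the differentiated series converges uniformly on $[0,\infty)$. By the standard theorem on termwise differentiation, $K$ is $C^\infty$ and
\[
K^{(j)}=\sum_n k_n^{(j)}.
\]
Summing the bounds gives~\eqref{eq:decay} with $C_j=D_j\sum_n a_n^{j+2}e^{-(a_n-\pi)}$. The right-hand side of~\eqref{eq:decay} is integrable and tends to $0$, because $\pi e^{2t}$ dominates any linear term in $t$. For $j=0$, $K$ is a convergent series of positive terms, so $K>0$.

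The only real bookkeeping is the inductive description of $k_n^{(j)}$: checking that the powers of $a_n$ and the exponents stay controlled as stated. Everything else is routine.
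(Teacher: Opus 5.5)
Your proof is correct and takes the same route as the paper. Positivity comes termwise from $a_n e^{2t}\ge\pi>3/2$, and you bound each differentiated summand by a constant times $a_n^{j+2}e^{(9/2+2j)t}e^{-a_n e^{2t}}$. Your factorization $e^{-a_n e^{2t}}\le e^{-\pi e^{2t}}e^{-(a_n-\pi)}$ is exactly the paper's inequality $\sum_n n^{4+2j}e^{-\pi n^2u}\le e^{-\pi u}\sum_n n^{4+2j}e^{-\pi(n^2-1)}$. Your induction on the pairs $(m,\beta)$ just spells out the step the paper states in one line.
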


\begin{proof}
The $n$th summand in~\eqref{eq:K} is
\[
8a_{n}e^{5t/2}\left(a_{n}e^{2t}-\frac{3}{2}\right)e^{-a_{n}e^{2t}},
\]
which is strictly positive because $a_{n}e^{2t}\ge\pi>3/2$. After $j$ differentiations, every resulting
term is bounded in absolute value by a constant times
\[
n^{4+2j}e^{(9/2+2j)t}e^{-\pi n^2e^{2t}}.
\]
Since, for $u=e^{2t}\ge1$,
\[
\sum_{n=1}^\infty n^{4+2j}e^{-\pi n^2u}
\le e^{-\pi u}\sum_{n=1}^\infty n^{4+2j}e^{-\pi(n^2-1)},
\]
the differentiated series converges uniformly for $t\ge0$ after multiplication by the displayed
majorant, and~\eqref{eq:decay} follows. The remaining assertions are immediate.
\end{proof}

\begin{lemma}\label{lem:entire}
The function $S$ is an odd entire function of order at most one. Its zero at $z=0$ is simple.
\end{lemma}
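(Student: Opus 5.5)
The plan is to derive everything from the decay bound~\eqref{eq:decay} of Lemma~\ref{lem:kernel} with $j=0$, namely $0<K(t)\le C_0\exp\bigl(\tfrac92 t-\pi e^{2t}\bigr)$. For entireness I would expand $\sin(zt)=\sum_{k\ge0}(-1)^k(zt)^{2k+1}/(2k+1)!$ and integrate termwise. For $|z|\le R$ the integrand is dominated by $K(t)e^{Rt}$, which is integrable because the double-exponential decay beats any single exponential. Dominated convergence then justifies the interchange, and also shows that $S$ is given by a power series converging for all $z$; this proves that $S$ is entire. Oddness is immediate, either from $\sin(-zt)=-\sin(zt)$ or from the series, which contains only odd powers.

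For the order I would use the bound
\[
|S(z)|\le\int_0^\infty K(t)e^{|z|t}\,dt .
\]
Setting $r=|z|$, it suffices to show $\log\int_0^\infty e^{rt+\frac92 t-\pi e^{2t}}\,dt=O(r\log r)$ as $r\to\infty$. I would split the integral at $T=\tfrac12\log r$. On $[0,T]$ the integrand is at most $e^{(r+9/2)T}$, and $T e^{(r+9/2)T}\le e^{O(r\log r)}$. On $[T,\infty)$ we have $\pi e^{2t}\ge \pi r\,e^{2(t-T)}\ge \pi r\bigl(1+2(t-T)\bigr)$. Hence the exponent is at most $(r+\tfrac92)t-2\pi r(t-T)-\pi r$, which is decreasing in $t$ for large $r$ because $2\pi r>r+9/2$. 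This tail therefore contributes $O(e^{(r+9/2)T})$ as well.

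Thus $M(r)\le\exp\bigl(\tfrac12 r\log r+O(r)\bigr)$, so the order is at most one. Order exactly one is consistent with this, and it is only the upper bound that is needed. I expect this growth estimate to be the only step requiring any care, but it reduces to the Laplace-type bound just described. An alternative route is to bound the Taylor coefficients $\int_0^\infty K(t)t^{2k+1}\,dt\le C\,(\tfrac12\log k+O(1))^{2k+1}$ and apply the coefficient formula for the order, but the direct estimate is cleaner.

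For the simple zero, $S(0)=0$ by oddness. Differentiating under the integral sign, which is again justified by dominated convergence, gives
\[
S'(0)=\int_0^\infty tK(t)\,dt .
\]
This is strictly positive because $K>0$ on $(0,\infty)$ by Lemma~\ref{lem:kernel}, and it is finite by the decay bound. Hence $z=0$ is a simple zero of $S$.
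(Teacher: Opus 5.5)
Your proof is correct and has the same skeleton as the paper's. Both use the locally uniform majorant $K(t)e^{rt}$, the bound $|S(z)|\le\int_0^\infty K(t)e^{|z|t}\,dt$, and $S'(0)=\int_0^\infty tK(t)\,dt>0$. The differences are in technique.

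For entireness, the paper differentiates under the integral sign, while you expand $\sin(zt)$ and integrate termwise. Your route has a small advantage: it exhibits the Taylor coefficients $\int_0^\infty K(t)t^{2k+1}\,dt$ explicitly, so oddness and the value of $S'(0)$ can be read off at once.

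For the growth bound, the paper substitutes $u=e^{2t}$ to get $\frac C2\int_1^\infty u^{r/2+5/4}e^{-\pi u}\,du\le\frac C2\pi^{-q}\Gamma(q)$ with $q=r/2+9/4$, and then quotes Stirling's formula. That is quicker but depends on Stirling. You instead split at $T=\frac12\log r$, which is close to the maximizer $\frac12\log(r/2\pi)$ of $rt-\pi e^{2t}$, and use $e^x\ge1+x$. This is entirely elementary and yields $\log M(r)\le\frac12 r\log r+O(\log r)$, the same leading term Stirling gives.

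One point is worth writing out. On $[T,\infty)$ your exponent is affine in $t$ with slope $-(2\pi r-r-\frac92)<0$, so the tail is at most $e^{(r+9/2)T-\pi r}/(2\pi r-r-\frac92)$. Saying the exponent is decreasing does not by itself bound the integral.
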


\begin{proof}
For $|z|\le r$,
$|\sin(zt)|\le e^{rt}$. Lemma~\ref{lem:kernel} therefore gives an integrable majorant, locally uniformly
in $z$, and the same is true after differentiation with respect to $z$. Hence $S$ is entire, and oddness
is immediate from~\eqref{eq:S}. Furthermore,
\[
S'(0)=\int_0^\infty tK(t)\,dt>0,
\]
so the zero at the origin is simple.

It remains to record the growth. By Lemma~\ref{lem:kernel}, for $|z|\le r$ and $r\ge2$,
\[
|S(z)|\le C\int_0^\infty e^{(r+9/2)t-\pi e^{2t}}\,dt
=\frac C2\int_1^\infty u^{r/2+5/4}e^{-\pi u}\,du.
\]
The last integral is bounded by a constant multiple of
$\pi^{-q}\Gamma(q)$ with $q=r/2+9/4$. Stirling's formula then gives
\begin{equation}
\log\max_{|z|\le r}|S(z)|=O(r\log r).
\label{eq:growth}
\end{equation}
Consequently the order of $S$ is at most one.
\end{proof}

\section{Only finitely many zeros on the real axis}\label{sec:realzeros}

\begin{lemma}[Endpoint asymptotic]\label{lem:asymptotic}
For real $y\ne0$,
\begin{equation}
S(y)=\frac{K(0)}{y}
-\frac1{y^2}\int_0^\infty K''(t)\sin(yt)\,dt,
\label{eq:ibp}
\end{equation}
where
\begin{equation}
K(0)=8\sum_{n=1}^\infty
\left(\pi^2n^4-\frac32\pi n^2\right)e^{-\pi n^2}>0.
\label{eq:K0}
\end{equation}
In particular,
\begin{equation}
S(y)=\frac{K(0)}{y}+O(|y|^{-2})
\qquad (|y|\to\infty).
\label{eq:asy}
\end{equation}
\end{lemma}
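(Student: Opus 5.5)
Integrate by parts twice in \eqref{eq:S}, using Lemma~\ref{lem:kernel} to kill every term at infinity. Fix real $y\ne0$.

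First integrate by parts with $u=K(t)$ and $dv=\sin(yt)\,dt$, so $v=-\cos(yt)/y$. This gives
\[
S(y)=\Bigl[-\tfrac{K(t)\cos(yt)}{y}\Bigr]_0^\infty+\frac1y\int_0^\infty K'(t)\cos(yt)\,dt .
\]
Since $K(t)\to0$ as $t\to\infty$ by \eqref{eq:decay}, the boundary term equals $K(0)/y$.

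Next integrate by parts again with $u=K'(t)$ and $v=\sin(yt)/y$. The boundary term $[K'(t)\sin(yt)/y]_0^\infty$ vanishes at both ends: at $0$ because $\sin 0=0$, and at $\infty$ because $K'\to0$. What remains is
\[
-\frac1{y^2}\int_0^\infty K''(t)\sin(yt)\,dt,
\]
which is \eqref{eq:ibp}. Each integral converges absolutely because $K',K''\in L^1$ by Lemma~\ref{lem:kernel}, so both integrations by parts are justified by applying them on $[0,T]$ and letting $T\to\infty$.

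The formula \eqref{eq:K0} comes from evaluating \eqref{eq:K} at $t=0$. Its positivity follows because each summand is positive, since $\pi n^2\ge\pi>3/2$; this is exactly the positivity already shown in Lemma~\ref{lem:kernel}. For \eqref{eq:asy}, the remainder integral is bounded in absolute value by $\|K''\|_{L^1}<\infty$, uniformly in $y$. The error is therefore at most $\|K''\|_{L^1}\,y^{-2}$.

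No step here is a real obstacle. The only point needing care is confirming that the boundary terms at infinity vanish and that $K''$ is integrable, and both are supplied by \eqref{eq:decay} with $j=0,1,2$.
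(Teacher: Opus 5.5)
Your proposal is correct and is essentially the paper's proof: two integrations by parts justified by the decay estimates of Lemma~\ref{lem:kernel}, with the remainder bounded by $\|K''\|_{L^1}\,y^{-2}$, and $K(0)>0$ by termwise positivity since $\pi n^2>3/2$. The truncation to $[0,T]$ and the check that the second boundary term vanishes at $t=0$ (because $\sin 0=0$) only make explicit details the paper leaves implicit.
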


\begin{proof}
Two integrations by parts are justified by Lemma~\ref{lem:kernel}:
\begin{align*}
S(y)
&={\left[-\frac{K(t)\cos(yt)}{y}\right]}_{0}^{\infty}
+\frac1y\int_0^\infty K'(t)\cos(yt)\,dt\\
&=\frac{K(0)}{y}
-\frac1{y^2}\int_0^\infty K''(t)\sin(yt)\,dt.
\end{align*}
Since $K''\in L^1(0,\infty)$, the remainder is $O(y^{-2})$. Every summand in~\eqref{eq:K0} is positive
because $\pi n^2>3/2$.
\end{proof}

\begin{corollary}\label{cor:real}
The function $S$ has only finitely many real zeros. Equivalently, $\widehat\xi$ has only finitely many
zeros on $\Re s=1/2$.
\end{corollary}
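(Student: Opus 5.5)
The plan is to combine the endpoint asymptotic of Lemma~\ref{lem:asymptotic} with the fact that $S$ is entire (Lemma~\ref{lem:entire}). From~\eqref{eq:ibp}, for real $y\ne0$,
\[
\left|y\,S(y)-K(0)\right|\le \frac{\|K''\|_{L^1(0,\infty)}}{|y|}.
\]
Choose $Y>0$ with $Y\ge 2\|K''\|_{L^1}/K(0)$. Then for every real $y$ with $|y|\ge Y$ we get $|yS(y)-K(0)|\le K(0)/2$. Since $K(0)>0$ by~\eqref{eq:K0}, this gives $yS(y)\ge K(0)/2>0$. In particular $S(y)\ne0$ for $|y|\ge Y$, and $S(y)$ has the sign of $y$ there. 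Hence every real zero of $S$ lies in the compact interval $[-Y,Y]$.

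Next, $S$ is entire and not identically zero, because $S'(0)>0$ by Lemma~\ref{lem:entire}. Its zeros are therefore isolated, so only finitely many of them lie in $[-Y,Y]$. Together with the first step, $S$ has only finitely many real zeros.

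For the equivalent formulation, use the transport identity~\eqref{eq:transport}: $\widehat\xi(\tfrac12+iz)=iS(z)$. The map $z\mapsto \tfrac12+iz$ sends the real axis bijectively onto the line $\Re s=1/2$. So the zeros of $\widehat\xi$ on the critical line correspond exactly to the real zeros of $S$, and there are finitely many of them.

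None of these steps is a real obstacle. The only point needing care is that the $O(|y|^{-2})$ remainder is explicit and uniform, and~\eqref{eq:ibp} provides exactly that through the bound by $\|K''\|_{L^1}$, which is finite by Lemma~\ref{lem:kernel}.
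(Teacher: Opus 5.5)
Your proof is correct and follows essentially the same argument as the paper. The paper also uses $B=\int_0^\infty|K''(t)|\,dt$ in \eqref{eq:ibp} to confine the real zeros to a compact interval, then uses the isolation of zeros of the nonzero entire function $S$; the only differences are cosmetic (it handles $y<0$ by oddness instead of your two-sided bound on $yS(y)$, and uses the threshold $B/K(0)$ rather than $2B/K(0)$).
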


\begin{proof}
Let $B=\int_0^\infty|K''(t)|\,dt$. From~\eqref{eq:ibp},
\[
S(y)\ge \frac{K(0)}{y}-\frac{B}{y^2}>0
\qquad\text{for }y>B/K(0).
\]
By oddness, $S(y)<0$ for $y<-B/K(0)$. Hence all real zeros lie in a compact interval. Since $S$ is a
nonzero entire function, its zeros cannot accumulate in the finite plane, and therefore there are only
finitely many of them.
\end{proof}

\section{Infinitely many complex zeros}\label{sec:complexzeros}

We use only the standard finite-order form of Hadamard's factorization theorem; see, for example,
Boas~\cite[Chapter~2]{boas1954}.

\begin{lemma}\label{lem:hadamard}
The entire function $S$ has infinitely many distinct zeros in $\mathbb C$.
\end{lemma}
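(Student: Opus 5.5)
We argue by contradiction, using Lemma~\ref{lem:entire} and the asymptotic \eqref{eq:asy}. Suppose $S$ had only finitely many distinct zeros. Since $S$ is a nonzero entire function of order at most one (Lemma~\ref{lem:entire}), the finite-order Hadamard factorization theorem lets us collect the zeros, with multiplicity, into a polynomial $P$, and it gives
\[
S(z)=P(z)\,e^{az+b},\qquad a,b\in\mathbb C .
\]
Here the exponent is a polynomial of degree at most the order, so at most one. The remaining task is to show that no such function can match the behaviour of $S$ on the real axis.

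Restrict to real $y$. By \eqref{eq:asy}, $|y\,S(y)|\to K(0)>0$ as $|y|\to\infty$, in both directions. On the other hand,
\[
|y\,P(y)e^{ay+b}|=|y|\,|P(y)|\,e^{\Re b}\,e^{(\Re a)y}.
\]
We split according to the sign of $\Re a$.
\begin{itemize}
\item If $\Re a>0$, then as $y\to+\infty$ the exponential factor grows, while $|y P(y)|\ge c|y|$ for large $|y|$ because $P\not\equiv0$. So the product tends to $\infty$, contradicting convergence to $K(0)$.
\item If $\Re a<0$, the same argument applies with $y\to-\infty$.
\item If $\Re a=0$, then $|y\,S(y)|=e^{\Re b}|y|\,|P(y)|$. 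This tends to $\infty$, since $\deg(yP)\ge1$ (indeed $P(0)=0$ by Lemma~\ref{lem:entire}, so $\deg P\ge1$), again a contradiction.
\end{itemize}
Hence $S$ has infinitely many distinct zeros.

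The only point needing care is the invocation of Hadamard's theorem. It requires the order to be finite, and the exponent's degree is bounded by the order; both are supplied by the growth estimate \eqref{eq:growth}. Even if the order were only known to be finite rather than at most one, the argument would go through with a polynomial exponent $Q$. In that case we would compare $\Re Q(y)$ on the real line: its leading real-coefficient term forces either growth in one direction, or, if $\Re Q$ is bounded on $\mathbb R$, polynomial growth of $|yS(y)|$. So we do not expect a genuine obstacle. Combining this lemma with Corollary~\ref{cor:real} then gives infinitely many nonreal zeros, as Theorem~\ref{thm:main} asserts.
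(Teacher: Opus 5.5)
Your proof is correct and follows essentially the same route as the paper: Hadamard factorization gives $S=Pe^{az+b}$, and a case split on $\Re a$ contradicts the real-axis asymptotics. The only cosmetic difference is that you use $yS(y)\to K(0)>0$ where the paper uses $S(y)\to0$.

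One small caveat concerns your closing aside about general finite order, which the proof does not need. A nonconstant $\Re Q$ can tend to $-\infty$ in both directions (e.g.\ $e^{-z^2}$), giving decay rather than growth. That case is still excluded, but only because you use $yS(y)\to K(0)\neq0$; the paper's weaker fact $S(y)\to0$ would not exclude it.
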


\begin{proof}
Assume, to the contrary, that $S$ has only finitely many distinct zeros. By
Lemma~\ref{lem:entire}, $S$ has finite order at most one. Hadamard factorization therefore yields
\begin{equation}
S(z)=P(z)e^{az+b},
\label{eq:factor}
\end{equation}
where $P$ is a nonzero polynomial and $a,b\in\mathbb C$.

On the other hand, Lemma~\ref{lem:asymptotic} gives $S(y)\to0$ as $y\to+\infty$ and as
$y\to-\infty$. This is impossible for~\eqref{eq:factor}: if $\Re a>0$, then $|S(y)|$ cannot tend to zero
as $y\to+\infty$; if $\Re a<0$, it cannot tend to zero as $y\to-\infty$; and if $\Re a=0$, then
$|S(y)|=e^{\Re b}|P(y)|$, which also cannot tend to zero in both directions. This contradiction proves
the lemma.
\end{proof}

\begin{proof}[Proof of Theorem~\ref{thm:main}]
Lemma~\ref{lem:entire} and~\eqref{eq:transport} show that $\widehat\xi$ is entire and that $s=1/2$ is a
simple zero. By Corollary~\ref{cor:real}, only finitely many zeros of $S$ are real, whereas by
Lemma~\ref{lem:hadamard}, $S$ has infinitely many distinct zeros in $\mathbb C$. Hence infinitely many
zeros of $S$ are nonreal. Under the affine map $s=1/2+iz$, these become zeros of $\widehat\xi$ with
$\Re s\ne1/2$. Therefore Conjecture~\ref{conj:yang} is false.
\end{proof}

\begin{remark}
The disproof is driven by the nonzero endpoint value $K(0)$. For the sine transform, integration by
parts produces the leading term $K(0)/y$, which eventually fixes the sign on the real axis. The
classical xi function instead involves the cosine transform of the same kernel, for which this endpoint
term is absent. Thus the argument does not transfer to the classical Riemann Hypothesis.
\end{remark}

\section{Numerical illustration}\label{sec:numerics}

No numerical computation is used in the proof. For an independent illustration, one may evaluate
$\widehat\xi$ rapidly through an incomplete-gamma expansion. Let $w=s-1/2$, $a_{n}=\pi n^2$, and denote by
$\Gamma(\alpha,x)$ the upper incomplete gamma function. Termwise integration in~\eqref{eq:def} gives
\begin{equation}
\widehat\xi\!\left(\frac12+w\right)
=2\sum_{n=1}^\infty\bigl(G_n(w)-G_n(-w)\bigr),
\label{eq:gamma-series}
\end{equation}
where
\begin{equation}
G_n(w)=a_{n}^{-1/4-w/2}
\left[
\Gamma\!\left(\frac94+\frac w2,a_{n}\right)
-\frac32\Gamma\!\left(\frac54+\frac w2,a_{n}\right)
\right].
\label{eq:Gn}
\end{equation}
The series is locally uniformly convergent and is convenient for high-precision evaluation. For example,
it gives the off-critical-line zero
\begin{equation}
\begin{split}
s_0={}&8.7693851018082313598034501394770732244884\\
&{}+13.2105919594647796864361205053041683401445\,i,
\end{split}
\label{eq:numroot}
\end{equation}
to the displayed digits. Using 80-digit arithmetic, the digits in~\eqref{eq:numroot} are stable when the truncation is increased from
$n=10$ to $n=12$; for the unrounded numerical iterate, the $n=12$ truncated series has residual below
$10^{-80}$. This numerical value is included only as a check on the analytic conclusion of
Theorem~\ref{thm:main}; it is not part of the proof.

The symmetries
\[
\widehat\xi(s)=-\widehat\xi(1-s),
\qquad
\overline{\widehat\xi(s)}=\widehat\xi(\overline{s})
\]
then generate the corresponding symmetric zeros from~\eqref{eq:numroot}.

\section{Concluding observation}

The conjectured real-rootedness fails for a structural reason that is visible before any zero search is
performed: the sine transform has a nonvanishing endpoint term and hence only finitely many real zeros,
whereas its finite-order entire-function growth forces infinitely many zeros in the plane. In particular,
the failure is not a delicate numerical phenomenon and is unrelated to the truth or falsity of the
Riemann Hypothesis.

\section*{Acknowledgments}

The authors acknowledge OpenAI's GPT-6 Astra for substantial assistance in the development of the argument presented in this paper, including the identification of the sine-transform formulation and the endpoint asymptotic leading to the disproof. The proof and exposition were also subjected to independent model-based checks using several large language models, including different versions of GPT-6 and Anthropic's Claude Opus 5.5. These additional checks were used to test the derivations, identify possible gaps, and improve the presentation. The authors independently verified the mathematical arguments and take full responsibility for the results and exposition.

\end{document}